\newtheorem{thm}{Theorem}
\newtheorem{conj}[thm]{Conjecture}
\newcommand{\R}{\mathbb{R}}
\begin{document}

\title{A one-dimensional symmetry result\\
for solutions of an integral equation\\
of convolution type\thanks{This work has been carried out in
the framework of the Labex Archim\`ede (ANR-11-LABX-0033),
of the A*MIDEX project (ANR-11-IDEX-0001-02)
and of the WIAS ERC-1 project.
The research leading to these results has received funding from the 
European Research Council 
Grant n.~321186 - ReaDi - 
``Reaction-Diffusion Equations, Propagation and Modelling''
and n.~277749 - EPSILON -
``Elliptic Pde's and Symmetry of 
Interfaces and Layers for Odd Nonlinearities'',
and the PRIN Grant n.~201274FYK7 ``Critical Point Theory
and Perturbative Methods for Nonlinear Differential Equations''.
Part of this 
work was carried out during a visit by F. Hamel to the 
Weierstra{\ss} Institute, whose 
hospitality is thankfully acknowledged.}}

\author{Fran{\c{c}}ois Hamel\\
{\small Universit\'e d'Aix-Marseille}\\
{\small Institut de Math\'ematiques de Marseille}\\
{\small 39 rue Fr\'ed\'eric Joliot-Curie}\\
{\small 13453 Marseille Cedex 13, France}\\
{\small {\tt{francois.hamel@univ-amu.fr}} } \\
\\
and\\
\\
Enrico Valdinoci\\
{\small Weierstra{\ss} Institute}\\
{\small Mohrenstra{\ss}e 39}\\
{\small 10117 Berlin, Germany}\\
{\small and}\\
{\small Universit\`a di Milano}\\
{\small Dipartimento di Matematica Federigo Enriques}\\ 
{\small Via Cesare Saldini 50}\\
{\small 20133 Milano, Italia}\\
{\small {\tt{enrico@math.utexas.edu}} }
\\ } 

\maketitle

\newpage

\begin{abstract}
We consider an integral equation in the plane,
in which the leading operator is of convolution type,
and we prove that monotone (or stable) solutions
are necessarily one-dimensional.
\end{abstract}

\bigskip

\noindent{\bf Mathematics Subject Classification:} 45A05, 47G10, 47B34.

\noindent{\bf Keywords:} Integral operators, convolution kernels,
one-dimensional symmetry, De Giorgi Conjecture.
\bigskip

\section*{Introduction}

In this paper, we consider solutions of an integral
equation driven by the following nonlocal, linear operator
of convolution type:
\begin{equation}\label{OP} {\mathcal{L}} u(x):= \int_{\R^n} 
\big( u(x)-u(y)\big)\,k(x-y)\,dy.\end{equation}
Here we suppose\footnote{For the sake of completeness,
we point out that assumptions more general than~\eqref{ASS-kernel}
may be taken into account with the same methods as the ones
used in this paper. For instance, one could follow assumptions~(H1)--(H4)
in~\cite{Coville} with~$g\ge\alpha$ for some~$\alpha>0$.
We focus on the simpler case of assumption~\eqref{ASS-kernel}
for simplicity.}
that~$k$ is an even, measurable kernel with normalization
$$ \int_{\R^n} k(\zeta)\,d\zeta=1$$
and
such that
\begin{equation}\label{ASS-kernel}
m_o\chi_{B_{r_o}}(\zeta)\le k(\zeta)\le
M_o\chi_{B_{R_o}}(\zeta)
\end{equation}
for any~$\zeta\in\R^n$, for some fixed~$M_0\geq m_0>0$ and~$R_0\geq r_0>0$.

We consider here solutions~$u$ of the semilinear 
equation
\begin{equation}\label{EQ}
{\mathcal{L}} u(x) = f\big(u(x)\big).
\end{equation}
In the past few years, there has been an intense activity
in this type of equations, both for its mathematical interest
and for its relation with biological models, see, among the
others~\cite{25, 26, 32, 34}. In this case, the solution~$u$
is thought as the density of a biological species
and the nonlinearity~$f$ is often a logistic map,
which prescribes the birth and death rate of the population.
In this framework,
the nonlocal diffusion modeled by~${\mathcal{L}}$
is motivated by the long-range interactions between
the individuals of the species.
\medskip

The goal of this paper is to study the symmetry properties
of solutions of~\eqref{EQ} in the light of a famous conjecture
of De Giorgi arising in elliptic partial differential
equations, see~\cite{EDG}. The original problem
consisted in the following question:

\begin{conj}\label{C:DG}
Let~$u$ be a bounded solution of
$$ -\Delta u=u-u^3$$
in the whole of~$\R^n$, with
\begin{equation*}
{\mbox{$\partial_{x_n} u(x)>0$
for any $x\in\R^n$.}}\end{equation*}
Then, $u$ is necessarily one-dimensional, i.e. there exist~$u_\star:\R\to\R$
and~$\omega\in \R^n$ such that~$u(x)=u_\star(\omega\cdot x)$,
for any~$x\in\R^n$, at least when~$n\le8$.
\end{conj}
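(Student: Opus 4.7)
The plan is to show that $\nabla u$ is parallel to a fixed direction $\omega\in\R^n$, which immediately yields the one-dimensional structure $u(x)=u_\star(\omega\cdot x)$. Following the classical strategy initiated by Berestycki-Caffarelli-Nirenberg and developed by Ghoussoub-Gui, I would differentiate the equation $-\Delta u=u-u^3$ in each coordinate direction to obtain the linearized equation $-\Delta\sigma_i=(1-3u^2)\,\sigma_i$ satisfied by $\sigma_i:=\partial_{x_i}u$. Since $\sigma_n>0$ by hypothesis, I would introduce the quotients $\psi_i:=\sigma_i/\sigma_n$ for $i=1,\dots,n-1$ and aim to prove that each $\psi_i$ is constant on $\R^n$; together with the identity $\nabla u=\sigma_n(\psi_1,\dots,\psi_{n-1},1)$, this forces $\nabla u$ to point in a fixed direction and hence $u$ to be one-dimensional.

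A direct computation shows that each $\psi_i$ satisfies the degenerate divergence-form equation $\operatorname{div}(\sigma_n^{2}\,\nabla\psi_i)=0$ on $\R^n$. The next step is a Liouville-type theorem asserting that any bounded $\psi_i$ solving this equation is necessarily constant, provided the weighted Dirichlet integral $\int_{B_R}\sigma_n^{2}|\nabla\psi_i|^2$ has at most quadratic growth in $R$. To prove it, I would test the equation against $\psi_i\,\eta^2$ for a cutoff $\eta$ supported in $B_{2R}$ and equal to $1$ on $B_R$, integrate by parts, and then let $R\to\infty$ to force $\int_{\R^n}\sigma_n^{2}|\nabla\psi_i|^2=0$. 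The viability of this step rests on the crucial energy estimate $\int_{B_R}\sigma_n^{2}|\nabla\psi_i|^2\le CR^{2}$, which itself follows from the stability of $u$ (a consequence of the monotonicity assumption, since $\sigma_n$ is a positive solution of the linearized operator).

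The main obstacle, and the source of the dimensional restriction $n\le 8$, is exactly the availability of this quadratic energy estimate. In dimensions $n=2$ and $n=3$ it can be established unconditionally (Ghoussoub-Gui for $n=2$ using Modica-type bounds in cylinders, and Ambrosio-Cabr\'e for $n=3$ via a refined Dirichlet energy estimate on balls). For $4\le n\le 8$ the conjecture was resolved by Savin under the additional pointwise limit $u(x',x_n)\to\pm1$ as $x_n\to\pm\infty$, through the classification of minimizing solutions, the $\Gamma$-convergence of the Allen-Cahn energy to the perimeter functional, and the rigidity of minimal surfaces up to dimension seven via Simons' theorem. For $n\ge 9$, del Pino-Kowalczyk-Wei constructed explicit counterexamples modelled on the Simons minimal cone, confirming the sharpness of the dimensional threshold appearing in the conjecture.
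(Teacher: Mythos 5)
The statement you are asked about is labelled a \emph{Conjecture} in the paper, and the paper offers no proof of it: the authors explicitly record that it ``is still open in its generality,'' known affirmatively only for $n=2$ and $n=3$, false for $n\ge 9$, and settled for $4\le n\le 8$ only under the extra limit hypothesis $u(x',x_n)\to\pm1$ as $x_n\to\pm\infty$. Your proposal does not close this gap either. The Berestycki--Caffarelli--Nirenberg/Ghoussoub--Gui/Ambrosio--Cabr\'e scheme you outline (differentiate the equation, form the quotients $\psi_i=\sigma_i/\sigma_n$, derive $\operatorname{div}(\sigma_n^2\nabla\psi_i)=0$, and apply a Liouville theorem under quadratic growth of the weighted energy) is correct as far as it goes, but the entire difficulty of the conjecture is concentrated in the energy estimate $\int_{B_R}\sigma_n^2|\nabla\psi_i|^2\le CR^2$, which is available only for $n=2$ (from $\|\nabla u\|_{L^\infty}<\infty$) and, after considerable work, for $n=3$ (where the Ambrosio--Cabr\'e bound $\int_{B_R}|\nabla u|^2\le CR^{n-1}$ happens to equal $CR^2$). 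For $4\le n\le 8$ no such estimate is known without additional hypotheses, the $R^{n-1}$ bound exceeds the quadratic threshold below which the Liouville property holds, and Savin's theorem --- which you invoke for this range --- both requires the limit assumption absent from the conjecture's statement and proceeds by an entirely different route (classification of minimizers via improvement of flatness, not the Liouville argument). So your text is an accurate survey of the state of the art, but the crucial step for $4\le n\le 8$ is missing, and supplying it would resolve a famous open problem.

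For what it is worth, the method you describe is essentially the one the paper \emph{does} carry out --- but for its own Theorem~2, a two-dimensional symmetry result for the nonlocal convolution equation ${\mathcal{L}}u=f(u)$, where the quotient $v=u_1/u_2$, a cut-off test function, a Harnack inequality for the integral operator, and a Cauchy--Schwarz absorption argument combine to give exactly the quadratic-growth Liouville conclusion in the case $n=2$. That is the setting in which this strategy is actually a proof.
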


The literature has presented several variations of Conjecture~\ref{C:DG}:
in particular, a weak form of it has been investigated
when the additional assumption
\begin{equation}\label{LIM}
\lim_{x_n\to\pm\infty} u(x_1,\dots,x_n)=\pm1
\end{equation}
is added to the hypotheses.

When the limit in~\eqref{LIM} is uniform in the
variables~$(x_1,\dots,x_{n-1})\in\R^{n-1}$,
the version of Conjecture~\ref{C:DG} obtained in this
way is due to Gibbons and is related to problems in
cosmology.

In spite of the intense activity of the problem,
Conjecture~\ref{C:DG} is still open in its generality.
Up to now, Conjecture~\ref{C:DG} is known to have a positive
answer in dimension~$2$ and~$3$ (see~\cite{GG, AC} and
also~\cite{BCN, AAC})
and a negative answer in dimension~$9$ and higher (see~\cite{DKW}).

Also, the weak form of
Conjecture~\ref{C:DG} under the limit assumption in~\eqref{LIM}
was proved (up to the optimal dimension~$8$) in~\cite{S},
and the version of Conjecture~\ref{C:DG}
under a uniform limit assumption in~\eqref{LIM}
holds true in any dimension
(see~\cite{F, BBG, BHM}).

Since it is almost impossible to keep track in this short introduction
of all the research developed on this important topic,
we refer to~\cite{FV} for further details and motivations.
\medskip

Goal of this paper is to investigate
whether results in the spirit of 
Conjecture~\ref{C:DG} hold true when the Laplace operator
is replaced by the nonlocal, integral operator in~\eqref{OP}.
We remark that symmetry results in nonlocal settings
have been obtained in~\cite{CSM, SV, LV, CS, CC1, CC2},
but all these works dealt with fractional operators with
a regularizing effect. Namely, the integral kernel
considered there is not integrable, therefore the solutions
of the associated equation enjoy additional regularity
and rigidity properties. Also, some of the problems
considered in the previous works rely on an extension
property of the operator that bring the problem
into a local (though higher dimensional and either singular
or degenerate) problem.

In this sense,
as far as we know, this paper is the first one to take into
account integrable kernels, for which the above
regularization techniques do not hold and for which
equivalent local problems are not available.
\medskip

In this note, we prove the following
one-dimensional result in dimension~$2$:

\begin{thm}\label{DG-2}
Let~$u$ be a solution of~\eqref{EQ}
in the whole of~$\R^2$, with~$\|u\|_{C^1(\R^2)}<+\infty$
and~$f\in C^1(\R)$. Assume that
\begin{equation}\label{mono}
{\mbox{$\partial_{x_2} u(x)>0$
for any $x\in\R^2$.}}\end{equation}
Then, $u$ is necessarily one-dimensional.
\end{thm}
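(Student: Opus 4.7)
The strategy is to adapt to this nonlocal setting the classical two-dimensional approach of Berestycki--Caffarelli--Nirenberg and Ambrosio--Cabr\'e for De~Giorgi's conjecture. Differentiating~\eqref{EQ} in an arbitrary direction $e\in\R^{2}$ (legitimate since $u\in C^{1}$, $f\in C^{1}$, and $k$ has compact support), one checks that every $v_{e}:=\partial_{e}u$ solves the linearised equation $\mathcal{L}v_{e}=f'(u)\,v_{e}$. In particular both $\sigma:=\partial_{x_{2}}u>0$ and $\psi:=\partial_{x_{1}}u$ are solutions. The goal is to prove that the continuous ratio $\phi:=\psi/\sigma$ is constant on $\R^{2}$, for then $\partial_{x_{1}}u=c\,\partial_{x_{2}}u$ for some $c\in\R$, forcing $u$ to be constant along the direction $(1,-c)$ and therefore one-dimensional.

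From the pointwise identity $\sigma\,\mathcal{L}\psi=\psi\,\mathcal{L}\sigma$ an expansion of the convolution gives
\begin{equation*}
\int_{\R^{2}}\bigl[\psi(x)\sigma(y)-\psi(y)\sigma(x)\bigr]\,k(x-y)\,dy=0\qquad\text{for all }x\in\R^{2},
\end{equation*}
and, since $\psi=\phi\sigma$, this is exactly the weighted nonlocal equation
\begin{equation*}
\int_{\R^{2}}\sigma(x)\sigma(y)\bigl(\phi(x)-\phi(y)\bigr)\,k(x-y)\,dy=0.
\end{equation*}
Testing this against $\phi\eta^{2}$ for a smooth cutoff $\eta\in C_{c}(\R^{2})$, symmetrising thanks to the evenness of $k$ via the algebraic identity $\phi(x)\eta^{2}(x)-\phi(y)\eta^{2}(y)=\eta^{2}(x)(\phi(x)-\phi(y))+\phi(y)(\eta^{2}(x)-\eta^{2}(y))$, and absorbing the resulting cross term by Young's inequality with a small parameter, I expect the Caccioppoli-type estimate
\begin{equation*}
\int\!\!\int \sigma(x)\sigma(y)\bigl(\eta^{2}(x)+\eta^{2}(y)\bigr)\bigl(\phi(x)-\phi(y)\bigr)^{2}k(x-y)\,dx\,dy\leq C\int\!\!\int \sigma(x)\sigma(y)\bigl(\phi(x)+\phi(y)\bigr)^{2}\bigl(\eta(x)-\eta(y)\bigr)^{2}k(x-y)\,dx\,dy.
\end{equation*}

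To kill the right-hand side I would choose, as in the classical two-dimensional local proof, the logarithmic cutoff $\eta_{R}\equiv 1$ on $B_{R}$, $\eta_{R}\equiv 0$ off $B_{R^{2}}$, with $|\nabla\eta_{R}|\lesssim(|x|\log R)^{-1}$ in the annulus. Since $(\eta_{R}(x)-\eta_{R}(y))^{2}k(x-y)$ is supported on $|x-y|\leq R_{o}$, a local Harnack inequality for positive solutions of $\mathcal{L}v=c(x)v$ (available under~\eqref{ASS-kernel}, cf.\ Coville's framework referenced by the footnote) yields $\sigma(y)\leq H\sigma(x)$ on this set; together with $\|\psi\|_{\infty}\leq\|u\|_{C^{1}}<+\infty$ this turns $\sigma(x)\sigma(y)(\phi(x)+\phi(y))^{2}$ into a bounded quantity, and a direct two-dimensional computation yields $\int\!\!\int(\eta_{R}(x)-\eta_{R}(y))^{2}k(x-y)\,dx\,dy\lesssim 1/\log R\to 0$ as $R\to+\infty$. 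Passing to the limit forces $\sigma(x)\sigma(y)(\phi(x)-\phi(y))^{2}k(x-y)\equiv 0$, hence $\phi(x)=\phi(y)$ whenever $|x-y|<r_{o}$ (because $\sigma>0$ and $k\geq m_{o}$ on $B_{r_{o}}$), and a connectedness argument propagates constancy of~$\phi$ to the whole of $\R^{2}$. The principal obstacle is precisely the Harnack-type control $\sigma(y)/\sigma(x)\leq H$ on balls of radius $R_{o}$: in the local elliptic case this is a classical consequence of the strong maximum principle, whereas here it must be extracted from the purely integrable operator $\mathcal{L}$ without any regularising mechanism---this is the main nonlocal input one really needs from~\eqref{ASS-kernel}.
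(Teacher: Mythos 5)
Your proposal is correct, and its skeleton coincides with the paper's: differentiate the equation to get $\mathcal{L}u_i=f'(u)\,u_i$, form the ratio $v=u_1/u_2$, derive a weighted nonlocal Caccioppoli inequality by testing against $v$ times a cutoff and symmetrising with the evenness of $k$, invoke Coville's Harnack inequality on balls of radius $R_0$ to control $u_2(x)/u_2(y)$ on the support of $k(x-y)$, and conclude local (hence, by connectedness, global) constancy of $v$. The one genuine difference is how the right-hand side of the Caccioppoli inequality is killed. You use the logarithmic cutoff ($\eta_R=1$ on $B_R$, vanishing off $B_{R^2}$, $|\nabla\eta_R|\lesssim(|x|\log R)^{-1}$), so that, after the Harnack and $\|u\|_{C^1}$ bounds reduce the right-hand side to $C\iint(\eta_R(x)-\eta_R(y))^2k(x-y)\,dx\,dy$, a direct computation in the annulus gives $O(1/\log R)\to0$ and the conclusion follows in one pass. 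The paper instead takes the standard cutoff at scale $R$ (with $|\nabla\tau|\le CR^{-1}$), for which the right-hand side is only $O(1)$ (the gradient decay $R^{-2}$ exactly cancels the measure $R^2$ of the relevant region), and then closes the argument with the Ambrosio--Cabr\'e bootstrap: a Cauchy--Schwarz step yields $J_1^2\le C\,J_1\big|_{\mathcal{R}_R}$, whence the full energy is finite, and absolute continuity of the integral forces the restricted energy, and hence $J_1$, to vanish as $R\to\infty$. Both closures are valid in dimension $2$; yours is slightly more elementary at this step (no second Cauchy--Schwarz, no $L^1$/vanishing-tail argument), while the paper's has the advantage of working with the simplest possible cutoff. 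You also correctly isolate the Harnack inequality for the integrable kernel as the essential nonlocal input that replaces the strong maximum principle of the local case.
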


The proof of Theorem~\ref{DG-2} relies on a technique
introduced by~\cite{BCN} and refined in~\cite{AC},
which reduced the symmetry property to a Liouville type property
for an associated equation (of course, differently from
the classical case, we will have to deal with equations,
and in fact inequalities, of integral type, in which the
appropriate simplifications are more involved).

For the existence of one-dimensional solutions of~\eqref{EQ}
under quite general conditions, see Theorem~3.1(b)
in~\cite{Bates}.

The rest of the paper is devoted to the proof of Theorem \ref{DG-2}.

\begin{proof}[Proof of Theorem \ref{DG-2}]
We observe that
\begin{equation}\label{R1}
\begin{split}
&2\int_{\R^2} {\mathcal{L}} f(x)\,g(x)\,dx
= 
2\int_{\R^2}\left[\int_{\R^2}
\big( f(x)-f(y)\big)\,k(x-y)\,dy\right]\,g(x)\,dx
\\ &\qquad=
\int_{\R^2}\left[\int_{\R^2}
\big( f(x)-f(y)\big)\,k(x-y)\,dy\right]\,g(x)\,dx
\\ &\qquad\qquad+
\int_{\R^2}\left[\int_{\R^2}
\big( f(y)-f(x)\big)\,k(x-y)\,dx\right]\,g(y)\,dy
\\ &\qquad=
\int_{\R^2}\int_{\R^2}
\big( f(x)-f(y)\big)\,
\big( g(x)-g(y)\big)\,
k(x-y)\,dx\,dy.
\end{split}
\end{equation}
Now we let~$u_i:=\partial_{x_i} u$, for~$i\in\{1,2\}$.
In light of~\eqref{mono}, we can define
\begin{equation}\label{DF:v}
v(x):=\frac{u_1(x)}{u_2(x)}.\end{equation}
Also, fixed~$R>1$ (to be taken as large as we wish in the sequel),
we consider a cut-off function~$\tau:=\tau_R\in C^\infty_0(B_{2R})$,
such that~$\tau=1$ in~$B_R$ and~$|\nabla \tau|\le CR^{-1}$,
for some~$C>0$.

By~\eqref{EQ}, we have that
\begin{equation}\label{5bis}\begin{split}
& f'\big(u(x)\big)\,u_i(x)=
\partial_{x_i} \left( f\big(u(x)\big)\right)\\
&\qquad=
\partial_{x_i} \big({\mathcal{L}} u(x)\big)
=\partial_{x_i} \left( \int_{\R^2}
\big( u(x)-u(x-\zeta)\big)\,k(\zeta)\,d\zeta \right)
\\ &\qquad= \int_{\R^2}
\big( u_i(x)-u_i(x-\zeta)\big)\,k(\zeta)\,d\zeta 
= \int_{\R^2}
\big( u_i(x)-u_i(y)\big)\,k(x-y)\,dy\\
&\qquad= {\mathcal{L}} u_i(x).
\end{split}\end{equation}
Accordingly,
\begin{eqnarray*}
&& f'(u)\,u_1 u_2 = {\mathcal{L}} u_1 \,u_2\\
{\mbox{and }} && f'(u)\,u_1 u_2 = {\mathcal{L}} u_2 \,u_1.
\end{eqnarray*}
By subtracting these two identities
and using~\eqref{DF:v}, we obtain
$$ 0={\mathcal{L}} u_1 \,u_2-
{\mathcal{L}} u_2 \,u_1 = {\mathcal{L}} (v u_2) \,u_2
-{\mathcal{L}} u_2 \,(v u_2).$$
Now, we multiply by~$2\tau^2 v$ and we integrate.
Hence, recalling~\eqref{R1}, we conclude that
\begin{equation}\label{XR-1}
\begin{split}
0\, &= 2\int_{\R^2} {\mathcal{L}} (v u_2)(x) \,(\tau^2 v u_2)(x)\,dx
- 2\int_{\R^2}
{\mathcal{L}} u_2(x) \,(\tau^2 v^2 u_2)(x)\,dx\\
&=
\int_{\R^2}\int_{\R^2}
\big( vu_2(x)-vu_2(y)\big)\,
\big( \tau^2 v u_2(x)-\tau^2 v u_2(y)\big)\,
k(x-y)\,dx\,dy
\\ &\qquad-
\int_{\R^2}\int_{\R^2}
\big( u_2(x)-u_2(y)\big)\,
\big( \tau^2 v^2 u_2(x)-\tau^2 v^2 u_2(y)\big)\,
k(x-y)\,dx\,dy\\
&=: I_1-I_2.
\end{split}
\end{equation}
By writing
$$ vu_2(x)-vu_2(y) =
\big(u_2(x)-u_2(y)\big)\,v(x) + \big(v(x)-v(y)\big)\, u_2(y),$$
we see that
\begin{equation}\label{XR-2}\begin{split}
I_1\,& =
\int_{\R^2}\int_{\R^2}
\big(u_2(x)-u_2(y)\big)\,\big( \tau^2 v u_2(x)-\tau^2 v u_2(y)\big)\,
v(x)\,k(x-y)\,dx\,dy
\\&\qquad+
\int_{\R^2}\int_{\R^2} \big(v(x)-v(y)\big)\, 
\big( \tau^2 v u_2(x)-\tau^2 v u_2(y)\big)\,
u_2(y)\,k(x-y)\,dx\,dy
.\end{split}\end{equation}
In the same way, if we write
$$ \tau^2 v^2 u_2(x)-\tau^2 v^2 u_2(y)
=\big(
\tau^2 v u_2(x)-
\tau^2 v u_2(y)\big)\,v(x)
+\big(v(x)-v(y)\big)
\,\tau^2 v u_2(y),$$
we get that
\begin{equation}\label{XR-3}\begin{split}
I_2\,&=\int_{\R^2}\int_{\R^2}
\big( u_2(x)-u_2(y)\big)\,
\big(
\tau^2 v u_2(x)-
\tau^2 v u_2(y)\big)\,v(x)
\,k(x-y)\,dx\,dy\\&\qquad+
\int_{\R^2}\int_{\R^2}
\big( u_2(x)-u_2(y)\big)\,
\big(v(x)-v(y)\big)\,\tau^2 v u_2(y)\,k(x-y)\,dx\,dy
.\end{split}\end{equation}
By~\eqref{XR-2} and~\eqref{XR-3}, after a simplification we obtain that
\begin{eqnarray*}
I_1-I_2&=&
\int_{\R^2}\int_{\R^2} \big(v(x)-v(y)\big)\,
\big( \tau^2 v u_2(x)-\tau^2 v u_2(y)\big)\,
u_2(y)\,k(x-y)\,dx\,dy
\\ &&\qquad- \int_{\R^2}\int_{\R^2}
\big( u_2(x)-u_2(y)\big)\,
\big(v(x)-v(y)\big)\,\tau^2 v u_2(y)\,k(x-y)\,dx\,dy.\end{eqnarray*}
Now we notice that
\begin{eqnarray*}
&& \tau^2 v u_2(x)-\tau^2 v u_2(y)\\&=&
\big(v(x)-v(y)\big)\,\tau^2(x) \,u_2(x)
+\big(\tau^2 (x)- \tau^2(y) \big)\,u_2(x)\,v(y)
+\big( u_2(x)-  u_2(y)\big)\,\tau^2(y)\,v(y)
,
\end{eqnarray*}
and so
\begin{eqnarray*}
I_1-I_2&=&
\int_{\R^2}\int_{\R^2} \big(v(x)-v(y)\big)^2\,\tau^2(x) \,u_2(x)\,
u_2(y)\,k(x-y)\,dx\,dy \\ &&\quad+
\int_{\R^2}\int_{\R^2} \big(v(x)-v(y)\big)\,
\big(\tau^2 (x)-
\tau^2 (y)\big)\,v(y)
\,u_2(x)\,u_2(y)\,k(x-y)\,dx\,dy
.\end{eqnarray*}
Thus, using this and~\eqref{XR-1},
and recalling~\eqref{ASS-kernel} and the support properties
of~$\tau$, we deduce that
\begin{equation}\label{J24}
\begin{split}
J_1\,&:=\int_{\R^2}\int_{\R^2} \big(v(x)-v(y)\big)^2\,\tau^2(x) \,u_2(x)\,
u_2(y)\,k(x-y)\,dx\,dy \\ &\le
\iint_{{\mathcal{R}}_R} \big|v(x)-v(y)\big|\,
\big|\tau(x)-\tau (y)\big|\,
\big|\tau(x)+\tau (y)\big|\,
|v(y)|\,u_2(x)\,u_2(y)\,k(x-y)\,dx\,dy\\ &=:J_2
,\end{split}
\end{equation}
where
\begin{eqnarray*}
{{\mathcal{R}}_R} &:=& \{(x,y)\in\R^2\times\R^2 {\mbox{ s.t. }}
|x-y|\le R_0 \}\cap {{\mathcal{S}}_R}
\\ {\mbox{and }}\;
{\mathcal{S}}_R &:=& \Big( (B_{2R}\times B_{2R})\setminus (B_{R}\times B_{R}) \Big)
\;\cup \;\Big(
B_{2R}\times (\R^2\setminus B_{2R})\Big)\; \cup\; \Big(
(\R^2\setminus B_{2R})\times B_{2R}\Big).
\end{eqnarray*}
We use the symmetry in the~$(x,y)$ variables and the substitution~$\zeta:=
x-y$ to see that
\begin{equation}\label{measure}
\begin{split}
|{\mathcal{R}}_R|\, &\le \big| \{ |x-y|\le R_0 \}\cap \{|x|,|y|\le 2R\} \big|
+ 2\, \big| \{ |x-y|\le R_0 \}\cap \{|x|\le 2R\le|y|\} \big|
\\ &\le 3\, \int_{B_{R_0}} \left[ \int_{B_{2R}} \, dx\right]\,d\zeta
\\ &\le C R^2,\end{split}
\end{equation}
for some~$C>0$, possibly depending on~$R_0$.

Moreover, making use of the H\"older Inequality, we see that
\begin{equation}\label{J22}
\begin{split}
& J_2^2 \le 
\iint_{{\mathcal{R}}_R} \big(v(x)-v(y)\big)^2\,
\big(\tau(x)+\tau (y)\big)^2\,
\,u_2(x)\,u_2(y)\,k(x-y)\,dx\,dy
\\&\qquad\qquad\cdot
\iint_{{\mathcal{R}}_R} \big(\tau(x)-\tau (y)\big)^2\,
v^2(y)\,u_2(x)\,u_2(y)\,k(x-y)\,dx\,dy
.\end{split}\end{equation}
Now we claim that
\begin{equation}\label{HA}
u_2(x)\le C\,u_2(y)
\end{equation}
for any~$(x,y)\in{{\mathcal{R}}_R}$, for a suitable~$C>0$, possibly depending on~$R_0$.
For this, fix~$x$ and let~$\Omega:=B_{R_0}(x)$.
Then we use the
Harnack Inequality for integral equations
(recall~\eqref{ASS-kernel}, \eqref{mono}
and~\eqref{5bis}, and see  
Corollary~1.7 in~\cite{Coville}), to obtain that
$$ u_2(x)\le \sup_\Omega u_2 \le C\,\inf_\Omega u_2\le C\,u_2(y),$$
which establishes \eqref{HA}.

{F}rom~\eqref{DF:v} and~\eqref{HA},
we obtain that
$$ \big(\tau(x)-\tau (y)\big)^2\,
v^2(y)\,u_2(x)\,u_2(y) \le CR^{-2} \,v^2(y)\,u_2^2(y)= CR^{-2}\,u_1^2(y)
\le CR^{-2},$$
for some~$C>0$.
Hence, by \eqref{measure},
$$ \iint_{{\mathcal{R}}_R} \big(\tau(x)-\tau (y)\big)^2\,
v^2(y)\,u_2(x)\,u_2(y)\,k(x-y)\,dx\,dy\le C,$$
for some~$C>0$.
Therefore, recalling~\eqref{J22},
\begin{equation}\label{J23}
J_2^2 \le C \iint_{{\mathcal{R}}_R} \big(v(x)-v(y)\big)^2\,
\big(\tau(x)+\tau (y)\big)^2\,
\,u_2(x)\,u_2(y)\,k(x-y)\,dx\,dy.\end{equation}
Hence, since
$$ \big(\tau(x)+\tau (y)\big)^2=
\tau^2(x)+\tau^2 (y)+2\tau(x)\,\tau(y)\le
3\tau^2(x)+3\tau^2 (y),$$
we can use the symmetric role played by~$x$ and~$y$ in~\eqref{J23}
and obtain that
$$ J_2^2 \le C \iint_{{\mathcal{R}}_R} \big(v(x)-v(y)\big)^2\,
\tau^2(x)\,
\,u_2(x)\,u_2(y)\,k(x-y)\,dx\,dy,$$
up to renaming~$C>0$.
So, we insert this information into~\eqref{J24}
and we conclude that
\begin{equation}\label{J25}
\begin{split}
& \left[ \iint_{\R^2\times\R^2} \big(v(x)-v(y)\big)^2\,\tau^2(x) \,u_2(x)\,
u_2(y)\,k(x-y)\,dx\,dy \right]^2=J_1^2\\ &\quad\le J_2^2\le 
C \iint_{{\mathcal{R}}_R} \big(v(x)-v(y)\big)^2\,
\tau^2(x)\,
\,u_2(x)\,u_2(y)\,k(x-y)\,dx\,dy.
,\end{split}\end{equation}
for some~$C>0$.

Since clearly~${\mathcal{R}}_R\subseteq\R^2\times\R^2$,
we can simplify the estimate in~\eqref{J25} by writing
$$ \iint_{\R^2\times\R^2} \big(v(x)-v(y)\big)^2\,\tau^2(x) \,u_2(x)\,
u_2(y)\,k(x-y)\,dx\,dy \le C.$$
In particular, since~$\tau=1$ in~$B_R$,
$$ \iint_{B_R \times B_R} \big(v(x)-v(y)\big)^2\,u_2(x)\,
u_2(y)\,k(x-y)\,dx\,dy \le C.$$
Since~$C$ is independent of~$R$, we can send~$R\to+\infty$
in this estimate and obtain that the map
$$ \R^2\times\R^2\ni (x,y)\mapsto
\big(v(x)-v(y)\big)^2\,u_2(x)\,
u_2(y)\,k(x-y)$$
belongs to~$L^1(\R^2\times\R^2)$.

Using this and the fact that~${\mathcal{R}}_R$ approaches the
empty set as~$R\to+\infty$, we conclude that
$$ \lim_{R\to+\infty}
\iint_{{\mathcal{R}}_R} \big(v(x)-v(y)\big)^2 \,u_2(x)\,
u_2(y)\,k(x-y)\,dx\,dy =0.$$
Therefore, going back to~\eqref{J25},
\begin{eqnarray*}
&& 
\left[ \iint_{\R^2\times\R^2} \big(v(x)-v(y)\big)^2 \,u_2(x)\,
u_2(y)\,k(x-y)\,dx\,dy \right]^2\\
&\le& \lim_{R\to+\infty}
\left[ \iint_{\R^2\times\R^2} \big(v(x)-v(y)\big)^2\,\tau^2(x) \,u_2(x)\,
u_2(y)\,k(x-y)\,dx\,dy \right]^2\\ &\le&
\lim_{R\to+\infty}
C \iint_{{\mathcal{R}}_R} \big(v(x)-v(y)\big)^2\,
\tau^2(x)\,
\,u_2(x)\,u_2(y)\,k(x-y)\,dx\,dy.
\\ &=&0.\end{eqnarray*}
This and~\eqref{mono} imply that~$\big(v(x)-v(y)\big)^2 \,k(x-y)=0$
for any~$(x,y)\in\R^2\times\R^2$.
Hence, recalling~\eqref{ASS-kernel}, we have that~$v(x)=v(y)$
for any~$x\in\R^2$ and any~$y\in B_{r_0}(x)$.

As a consequence, the set~$\{y\in\R^2 {\mbox{ s.t. }} v(y)=v(0)\}$
is open and closed in~$\R^2$, and so, by connectedness,
we obtain that~$v$ is constant, say~$v(x)=a$ for some~$a\in\R$.
So we define~$\omega:=\frac{(a,1)}{\sqrt{a^2+1}}$ and we observe that 
$$ \nabla u(x) = u_2(x)\,(v(x),1)=u_2(x)\,{\sqrt{a^2+1}}\;\omega.$$ 
Thus, if~$\omega\cdot y=0$
then
$$ u(x+y)-u(x)=\int_0^1 \nabla u(x+ty)\cdot y\,dt
=\int_0^1 u_2(x+ty)\,{\sqrt{a^2+1}}\;\omega\cdot y\,dt=0.$$
Therefore, if we
set~$u_\star(t):=u(t\omega)$
for any~$t\in\R$, and we write any~$x\in\R^2$ as
$$ x=\left({\omega}\cdot x\right) \omega+y_x$$
with~$\omega\cdot y_x=0$, we conclude that
$$ u(x)=u\left(
\left({\omega}\cdot x\right) \omega +y_x\right)
= u\left(
\left({\omega}\cdot x\right) \omega \right)
=u_\star\left({\omega}\cdot x\right).$$
This completes the proof of Theorem~\ref{DG-2}.\end{proof}

For completeness, we observe that a more general
version of Theorem~\ref{DG-2} holds true,
namely if we replace assumption~\eqref{mono}
with a ``stability assumption'' in the sense of~\cite{AC}: the precise statement
goes as follows:

\begin{thm}\label{DG-2-STAB}
Let~$u$ be a solution of~\eqref{EQ}
in the whole of~$\R^2$, with~$\|u\|_{C^1(\R^2)}<+\infty$
and~$f\in C^1(\R)$. Assume that there exists~$\psi>0$ which solves
\begin{equation*}
{\mbox{${\mathcal{L}}\psi(x)=f'\big(u(x)\big)\,\psi(x)$
for any $x\in\R^2$.}}\end{equation*}
Then, $u$ is necessarily one-dimensional.
\end{thm}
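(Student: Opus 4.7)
The plan is to mimic the proof of Theorem~\ref{DG-2}, using the given positive function $\psi$ as a substitute for $u_2$. The key observation is that, by differentiating~\eqref{EQ} exactly as in~\eqref{5bis}, each partial derivative $u_i := \partial_{x_i} u$ with $i \in \{1,2\}$ also solves the linearized equation $\mathcal{L} u_i = f'(u)\,u_i$. Since by hypothesis $\psi > 0$ solves the same linear equation, the ``integration by parts'' trick used in Theorem~\ref{DG-2} can be run with the ratio $v_i := u_i/\psi$ in place of $v = u_1/u_2$.

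Concretely, multiplying $\mathcal{L} u_i = f'(u)\,u_i$ by $\psi$ and $\mathcal{L}\psi = f'(u)\,\psi$ by $u_i$ and subtracting yields
\[\mathcal{L}(v_i\,\psi)\cdot\psi \;-\; \mathcal{L}\psi\cdot (v_i\,\psi) \;=\; 0,\]
which is the analogue of the identity used just after~\eqref{DF:v}, with $u_2$ replaced by $\psi$. Multiplying by $2\tau^2 v_i$, integrating, and running through the algebraic manipulations~\eqref{XR-1}--\eqref{XR-3} verbatim, one obtains the counterpart of~\eqref{J24}, namely an estimate for
\[\iint_{\R^2\times\R^2} \big(v_i(x)-v_i(y)\big)^2\,\tau^2(x)\,\psi(x)\,\psi(y)\,k(x-y)\,dx\,dy\]
in terms of a quantity having the same structure as $J_2$, with $v_i$ and $\psi$ in place of $v$ and $u_2$.

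The Cauchy--Schwarz step~\eqref{J22} then requires two ingredients, both of which carry over to the stability setting. First, a Harnack inequality of the form $\psi(x) \le C\,\psi(y)$ on $\mathcal{R}_R$: this is the one place where the argument calls for an explicit check, but it follows from Corollary~1.7 of~\cite{Coville} applied to the linear equation $\mathcal{L}\psi = f'(u)\,\psi$, whose coefficient $f'(u)$ is bounded since $u$ is bounded and $f \in C^1$. Second, the pointwise bound $v_i^2\,\psi^2 = u_i^2 \le \|u\|_{C^1(\R^2)}^2$, which is immediate. With these two facts in hand, the chain of inequalities leading to~\eqref{J25} is unchanged, and letting $R \to +\infty$ gives $\big(v_i(x)-v_i(y)\big)^2\,k(x-y) = 0$ for every $(x,y)\in \R^2\times\R^2$.

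By the lower bound on $k$ in~\eqref{ASS-kernel} and the same connectedness argument used at the end of the proof of Theorem~\ref{DG-2}, each $v_i$ is therefore constant, so $u_i = c_i\,\psi$ for some $c_i \in \R$ and hence $\nabla u = (c_1, c_2)\,\psi$. If $(c_1, c_2) = (0,0)$ then $u$ is constant, hence trivially one-dimensional. Otherwise, setting $\omega := (c_1, c_2)/\sqrt{c_1^2 + c_2^2}$, the same final computation as in the proof of Theorem~\ref{DG-2} yields $u(x+y) = u(x)$ whenever $\omega \cdot y = 0$, and consequently $u(x) = u_\star(\omega\cdot x)$ for a suitable $u_\star : \R \to \R$.
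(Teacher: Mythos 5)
Your proposal is correct and follows essentially the same route as the paper, which likewise proves Theorem~\ref{DG-2-STAB} by rerunning the proof of Theorem~\ref{DG-2} with $v:=u_i/\psi$ in place of $u_1/u_2$ and $\psi$ in place of $u_2$ throughout. You even supply slightly more detail than the paper does (the explicit check that Coville's Harnack inequality applies to $\psi$, the bound $v_i^2\psi^2=u_i^2\le\|u\|_{C^1(\R^2)}^2$ replacing the corresponding bound on $u_1^2$, and the degenerate case $\nabla u\equiv 0$), all of which is accurate.
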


Notice that, in this setting, Theorem~\ref{DG-2}
is a particular case of Theorem~\ref{DG-2-STAB},
choosing~$\psi:=\partial u_2$ and recalling~\eqref{5bis}.

The proof of Theorem~\ref{DG-2-STAB} is like the one
of Theorem~\ref{DG-2}, with only a technical difference:
instead of~\eqref{DF:v}, one has to define, for~$i\in\{1,2\}$,
$$ v(x):=\frac{u_i(x)}{\psi(x)}.$$
Then the proof of Theorem~\ref{DG-2} goes through
(replacing~$u_2$ with~$\psi$ when necessary)
and implies that~$v$ is constant, i.e.~$u_i =a_i\psi$,
for some~$a_i\in\R$. This gives that~$\nabla u(x)=\psi(x)\,(a_1,a_2)$,
which in turn implies the one-dimensional symmetry of~$u$.
\medskip

Also, we think that it is an interesting open problem to investigate
if symmetry results in the spirit of Theorems~\ref{DG-2}
and~\ref{DG-2-STAB} hold true in higher dimension.

\end{document}